\DeclarePairedDelimiter{\ceil}{\lceil}{\rceil}
\author{Tuomas Orponen}
\title{On the tube-occupancy of sets in $\R^{d}$}
\address{School of Mathematics, University of Edinburgh}
\subjclass[2010]{28A78 (Primary); 28A80, 60D05 (Secondary).}
\thanks{T.O. gratefully acknowledges the financial support of the Finnish foundation Jenny and Antti Wihurin Rahasto.}
\email{tuomas.orponen@helsinki.fi}
\newcommand{\R}{\mathbb{R}}
\newcommand{\N}{\mathbb{N}}
\newcommand{\tn}{\mathbb{P}}
\newcommand{\calH}{\mathcal{H}}
\newcommand{\calS}{\mathcal{S}}
\newcommand{\calQ}{\mathcal{Q}}
\newcommand{\calU}{\mathcal{U}}
\newcommand{\calP}{\mathcal{P}}
\newcommand{\E}{\mathbb{E}}
\newcommand{\1}{\mathbf{1}}
\newcommand{\card}{\operatorname{card}}
\newcommand{\Var}{\operatorname{\mathbf{Var}}}
\numberwithin{equation}{section}
\theoremstyle{plain}
\newtheorem{thm}[equation]{Theorem}
\newtheorem{lemma}[equation]{Lemma}
\theoremstyle{definition}
\theoremstyle{remark}
\begin{document}

\begin{abstract} Call a pair $(s,t) \in [0,d] \times [0,d]$ \emph{admissible}, if there exists a compact set $K \subset \R^{d}$ and a constant $C > 0$ such that $0 < \calH^{s}(K) < \infty$, and
\begin{displaymath} \calH_{s}(K \cap T) \leq Cw(T)^{t} \end{displaymath}
for all tubes $T \subset \R^{d}$ of width $w(T)$. The purpose of this paper is to show that all pairs $(s,s)$ with $s < d - 1$ are admissible. Combined with previous results, this settles a question of A. Carbery.
\end{abstract}

\maketitle

\section{Introduction}

Let $d \geq 2$ be an integer. In \cite{Ca}, A. Carbery asks to determine which pairs $(s,t) \in [0,d] \times [0,d]$ are \emph{admissible} in the sense that there exists a (compact) set $K \subset \R^{d}$ of positive and finite $s$-dimensional Hausdorff measure with the property that
\begin{equation}\label{tubes} \calH_{s}(K \cap T) \leq Cw(T)^{t} \end{equation}
for all tubes $T$ and for some constant $C > 0$. Here and below, a \emph{tube $T \subset \R^{d}$ of width $w(T) > 0$} refers to the $w(T)/2$-neighbourhood of a line in $\R^{d}$. The known results can be summarised as follows:
\begin{itemize}
\item The condition \eqref{tubes} is closely related to the projections of the measure $\calH^{s}|_{K}$ into $(d - 1)$-dimensional subspaces. In particular, \eqref{tubes} implies that the projections of $K$ onto any such subspace are at least $t$-dimensional. This means that only pairs $(s,t)$ with $t \leq \min\{d - 1, s\}$ have a chance of being admissible.
\item Conversely, it was shown in \cite{Ca}, it was shown that that all pairs $(s,t)$ with $t < \min\{d - 1,s\}$ are admissible.

\item It follows from the Besicovitch projection theorem that the pair $(d - 1, d - 1)$ is \textbf{not} admissible; for details, see \cite{CSV}.
\item On the other hand, all pairs $(s,d - 1)$ are admissible for $s > d - 1$. This is due to P. Shmerkin and V. Suomala \cite{SS}. 
\end{itemize} 
So, the only remaining question concerns the pairs $(s,s)$ with $s < d - 1$. We answer this question:
\begin{thm}\label{main} All pairs $(s,s)$ are admissible for $s < d - 1$.
\end{thm}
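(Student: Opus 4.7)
The approach I would take is to produce $K$ as the support of a carefully randomised $s$-Ahlfors regular measure $\mu$ on $[0,1]^{d}$, and to reduce the tube bound to a uniform Frostman-type estimate on projections. Specifically, I aim for a $\mu$ satisfying $\mu(B(x,r)) \lesssim r^{s}$ (which in particular yields $0 < \calH^{s}(K) < \infty$) together with the projection bound
\[
\pi_{V}\mu(B(y,r)) \lesssim r^{s} \qquad \text{uniformly in } V,\ y \in V,\ r>0,
\]
taken over all linear $(d-1)$-planes $V \subset \R^{d}$. Granted this, a tube $T$ of width $w$ around a line $\ell$ is the $\pi_{\ell^{\perp}}$-preimage of a $(d-1)$-dimensional ball of radius $w/2$, so
\[
\mu(T) \;=\; \pi_{\ell^{\perp}}\mu\bigl(\pi_{\ell^{\perp}}(T)\bigr) \;\lesssim\; w^{s},
\]
and the mass distribution principle converts this into the desired $\calH^{s}(K \cap T) \lesssim w^{s}$.

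For the construction of $\mu$, I would run a multiscale random Cantor process in a large base $M$: inside each surviving generation-$(k-1)$ cube, select, independently across parent cubes, a random family of roughly $M^{s}$ of its $M^{d}$ subcubes. With positive probability this yields a compact $K$ carrying a natural $s$-Ahlfors regular Borel measure $\mu$. To control a given projection I would fix $r = M^{-k}$, a plane $V$, and $y \in V$, expand $\pi_{V}\mu(B(y,r))$ as a sum over generation-$\approx k$ parent cubes whose projected image lies near $y$, and observe that the contributions from distinct parent cubes are essentially independent. Their total expectation is of order $r^{s}$, and Azuma/Chernoff then provides sub-Gaussian tails. A covering argument over a polynomial-in-$r^{-1}$ net of pairs $(V,y)$, combined with the Lipschitz dependence $V \mapsto \pi_{V}\mu$, would upgrade the pointwise bound to the required uniform estimate; a Borel--Cantelli argument across the dyadic scales $r = M^{-k}$ then gives it almost surely at all scales.

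The principal technical obstacle is that the target estimate is \emph{critical}: the desired exponent on $w$ agrees with the expected one, leaving no dimensional cushion to absorb the polynomial loss from the union bound over tubes. To compensate, the random selection rule must be designed so that the variance of $\pi_{V}\mu(B(y,r))$ is as small as possible \emph{for every} $V$ simultaneously, which essentially means forbidding the selected subcubes of each parent cube from clustering in any codimension-one slab. Engineering such an anti-concentrated selection rule, proving sufficiently strong concentration at every scale, and then assembling the scales into a uniform bound is where the real work lies; the condition $s < d-1$ is used precisely to ensure that codimension-one spreading is still compatible with $s$-dimensional total mass. Once the projection bound is secured almost surely, combining the case $(s,s)$ with $s < d-1$ produced here with the earlier results recalled in the introduction completes Carbery's classification of admissible pairs.
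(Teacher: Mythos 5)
There is a genuine gap: the entire difficulty of the theorem is concentrated in the step you defer (``engineering such an anti-concentrated selection rule \ldots is where the real work lies''), and as sketched the standard tools you invoke do not close it. At the critical exponent, a plain uniformly random choice of $\sim M^{s}$ subcubes per parent does \emph{not} satisfy the required bounds: with high probability there are clusters (e.g.\ some cube of the intermediate side-length $\delta^{(d-s)/d}m^{-s/d}$ contains $\geq A$ selected points, and some tubes of near-minimal width contain too many points), so one needs a repair mechanism -- in the paper, an explicit point-removal process -- which your scheme lacks. Moreover, Azuma/Chernoff plus a polynomial net only beats the union bound over $\sim r^{-2(d-1)}$ tubes when the expected count in a tube is at least logarithmic in $1/r$, while for tubes of width comparable to the current scale the expected count is $\ll 1$ and only allowed to be exceeded by a fixed constant factor; this is exactly the regime where concentration inequalities give nothing and where the paper instead bounds the expected number of $q$-point configurations in tubes and then deletes points. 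For tubes of width between the intermediate scale and the parent scale the paper abandons probability altogether and argues deterministically, using a minimal-separation condition (condition (c)) together with the inductive hypothesis on the parent cubes.

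This last point exposes a second, more structural problem with your plan: you insist on an Ahlfors $s$-regular $\mu$ (you need two-sided regularity anyway, both to get $0<\calH^{s}(K)<\infty$ and to convert $\mu(T)\lesssim w^{s}$ into $\calH^{s}(K\cap T)\lesssim w^{s}$, since the latter uses the lower bound $\mu(B(x,r))\gtrsim r^{s}$). But the separation mechanism that the known proof uses to control wide tubes forces balls of radius between $\epsilon$ and $\delta^{(d-s)/d}m^{-s/d}$ to contain only one selected point, so the resulting set is far from $s$-regular -- and whether an Ahlfors $s$-regular set with the tube property exists at all is posed in this very paper as an open problem. Also note that uniform selection of subcubes is incompatible with lower regularity at intermediate scales, so you would have to redesign the selection rule, at which point the independence and expectation computations underlying your concentration step change. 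So your proposal is not a proof of Theorem \ref{main}; it is a programme whose missing core (a critical, uniform-in-$(V,y,r)$ projection bound for a regular random measure) is strictly harder than what the theorem asserts, whereas the paper's argument succeeds precisely by weakening that goal: it only propagates, scale by scale, the combinatorial invariant that a tube of width comparable to the cube side meets $\leq k$ cubes, and handles the intermediate and wide tubes by removal and by deterministic spreading, at the cost of Ahlfors regularity.
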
 

We will present an informal overview of the proof right away, while the technical details are given in the third and fourth sections. To prove the theorem, we need to build a set $K$ with $0 < \calH^{s}(K) < \infty$, satisfying \eqref{tubes} with $t = s$. To this end, we use a standard Cantor type construction: for various "generations" $n \in \N$, we seek for collections of closed and disjoint cubes $\calQ_{n}$ such that the union of the cubes in $\calQ_{n + 1}$ is contained in the union of the cubes in $\calQ_{n}$. For fixed $n \in \N$, the cubes in $\calQ_{n}$ will have a common side-length $\ell_{n} > 0$. As an induction hypothesis, we may assume that every tube $T \subset \R^{d}$ of width $w(T) = \ell_{n}$ intersects at most $k$ cubes in $\calQ_{n}$ for some large $k \in \N$. Then, to proceed, we need to find the family of cubes $\calQ_{n + 1}$ containing $\sim \ell_{n + 1}^{-s}$ members, so that every tube of width $\ell_{n + 1}$ meets no more than $k$ of them. Moreover, we have to do this in such a manner that all tubes $T$ of "intermediary" widths $\ell_{n + 1} < w(T) \leq \ell_{n}$ also behave well -- that is, do not intersect too many cubes in $\calQ_{n + 1}$.

The first idea would be to throw the centres of the cubes in $\calQ_{n + 1}$ uniformly at random inside the union of the cubes in $\calQ_{n}$. This cannot work directly, since random sets contain a certain amount of clustering with high probability (WHP), and \eqref{tubes} means practically zero tolerance towards such behaviour. Fortunately, this "certain amount" turns out to be small and easily quantifiable. In particular, WHP, there are only a few clusters of the type where some tube of width of with $w(T) = \ell_{n + 1}$ intersects more than $k$ cubes in $\calQ_{n + 1}$. So, we may simply take the clusters one by one and remove points from them, until they are clusters no longer. And, WHP, as it turns out, we can do this so that at least half of the originally selected points remain. A version of this procedure appeared already in \cite{Ca}, and, much earlier, a similar idea was used in connection with the Heilbronn triangle problem by Koml\'os, Pintz and Szemer\'edi \cite{KPS}.

Next, we face the tubes of width $\ell_{n + 1} < w(T) \leq \ell_{n}$. After defining the appropriate notion of "clustering" for tubes of intermediary width, it turns out that there are, again, only a few clusters WHP. So, fixing $w(T) \in (\ell_{n + 1},\ell_{n}]$ we may remove some further points to get rid of these clusters in the same manner as above. In fact, the expected number of clusters is so negligible that we can get simultaneously rid of \textbf{all} clusters corresponding to tubes of width $w(T) \in (\ell_{n + 1},\eta_{n + 1}]$, where $\eta_{n + 1} \in (\ell_{n + 1},\ell_{n})$ is a certain constant.

For tubes $T$ of width $w(T) \in (\eta_{n + 1},\ell_{n}]$ the strategy does not seem to work directly. Instead, the number $\eta_{n + 1}$ is selected so that every cube of side-length $\eta_{n + 1}$ is expected to contain $\sim 1$ of our randomly selected points. In this situation, we can redefine a "cluster" to mean an $A$-element set contained in a single cube of side-length $\eta_{n + 1}$, where $A \in \N$ is a large constant. Choosing $A$ large enough, it turns out that there are only a few clusters of this type, and they can be disposed of in a familiar manner. After this is done, we end up with a set, which is well-behaved with respect to thin tubes -- those of width $\ell_{n + 1} < w(T) \leq \eta_{n + 1}$ -- and is also very uniformly distributed on scales larger than $\eta_{n + 1}$. Finally, it suffices to check that \textbf{any} set, which has the latter uniform distribution property, is well-behaved with respect to thick tubes, namely those of width $\eta_{n + 1} < w(T) \leq \ell_{n}$. This concludes the inductive construction of $\calQ_{n + 1}$. 

Before we start with the technicalities, some quick words on notation and presentation: the letters $Q,R,S$ will be used to denote various cubes (with sides perpendicular to coordinate axes), while $T$ always stands for a tube -- a $w(T)/2$-neighbourhood of a line in $\R^{d}$, where $w(T) > 0$ is the \emph{width} of $T$.

Given $A,B > 0$, the notation $A \lesssim B$ means that $A \leq CB$ for some constant $C$ depending only on the dimension $d$ of the ambient space (so $d$ is regarded as an absolute constant in our notation). The two-sided inequality $A \lesssim B \lesssim A$ is abbreviated to $A \sim B$. Both cardinality and Lebesgue measure of planar sets will be denoted by $| \cdot |$: the notation will refer to cardinality for finite sets, and to Lebesgue measure otherwise. 


\section{Acknowledgements}

I am grateful to Henna Koivusalo for fruitful discussions during the early stages of the paper's preparation. I am also thankful to Tony Carbery for suggesting the problem and giving comments on the manuscript. Finally, I thank the referee for a careful review and many clarifying comments.

\section{Main lemma}\label{technicalSection}

As we outlined above, the set $K$ needed for Theorem \ref{main} will be constructed by defining recursively the families of cubes $\calQ_{n}$. The initial family will be $\calQ_{0} := \{[0,1]^{d}\}$, so all further action will take place inside the unit cube. The cubes have to satisfy certain properties, which are listed in the following main lemma:

\begin{lemma}\label{mainLemma} Fix $s < d - 1$ and $\delta > 0$ such that $\delta^{-s} \in \N$. Then, the following holds for large enough $k = k_{s}, m = m_{\delta,s} \in \N$ (so $k$ depends only on $s$, while $m$ may also depend on $\delta$). Assume that $U \subset [0,1]^{d}$ is the union of a family $\calU$ of $\delta^{-s}$ disjoint cubes, each of side-length $0 < \delta \leq 1$, such that every tube of width $2\delta$ meets no more than $k$ cubes. Then, there exists a collection of disjoint closed cubes $\calQ$ with the following properties.
\begin{itemize}
\item[(a)] All the cubes in $\calQ$ have equal side-length $\epsilon \sim 1/m$.
\item[(b)] The union of the cubes in $\calQ$ is contained in the union of the cubes in $\calU$. Moreover, every cube in $\calU$ contains exactly $(\delta/\epsilon)^{s} \in \N$ cubes in $\calQ$. In particular, there are $\epsilon^{-s} \in \N$ cubes in $\calQ$ altogether.
\item[(c)] An arbitrary cube of side-length $\delta^{(d - s)/d}m^{-s/d}$ intersects at most one cube in $\calQ$.
\item[(d)] Every tube $T$ of width $w(T) = 2\epsilon$ satisfies
\begin{displaymath} \card\{Q \in \calQ : T \cap Q \neq \emptyset\} \leq k, \end{displaymath}
and every tube $T$ of width $\epsilon \leq w(T) \leq \delta$ satisfies
\begin{displaymath} \card\{Q \in \calQ : T \cap Q \neq \emptyset\} \lesssim k \cdot \left(\frac{w(T)}{\epsilon}\right)^{s}. \end{displaymath}
\end{itemize} 
\end{lemma}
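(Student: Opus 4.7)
The plan is to construct $\calQ$ by a random selection followed by two rounds of cluster elimination, as sketched in the introduction. Let $\rho := \delta^{(d-s)/d}m^{-s/d}$ be the scale appearing in (c). Taking $\epsilon = c_{1}/m$ with $c_{1} = c_{1}(s,d)$ sufficiently large, the side-length $\rho_{\mathrm{box}} := c_{1}^{s/d}\rho$ strictly exceeds $2\rho$, and each $R \in \calU$ decomposes into exactly $(\delta/\epsilon)^{s}$ axis-parallel boxes of side $\rho_{\mathrm{box}}$. In each box I sample one $\epsilon$-subcube uniformly at random from the \emph{central} $\rho$-sub-region, independently across boxes and across $R$. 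Call the resulting family $\calP$. Properties (a) and (b) are immediate; (c) holds because the gap $\rho_{\mathrm{box}} - \rho > \rho$ between adjacent central regions forces any axis-parallel $\rho$-cube to meet at most one such region, hence at most one member of $\calP$.

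Next, for thin tubes of width $w \in [\epsilon, \eta]$ -- where $\eta \in (\rho,\delta)$ is a dyadic scale to be chosen -- declare $T$ a \emph{$w$-cluster} if it meets more than $Mk(w/\epsilon)^{s}$ members of $\calP$, with $M = M(s,d)$ large. Writing $X_{T}$ as a sum of $\{0,1\}$ indicators, one per box meeting $T$, each indicator has mean $\lesssim \min\{1,(w/\rho)^{d-1}\}$ and there are $\lesssim k\delta(w+\rho_{\mathrm{box}})^{d-1}/\rho_{\mathrm{box}}^{d}$ such indicators, using the hypothesis that tubes of width $2\delta$ meet $\leq k$ cubes of $\calU$. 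A short computation gives $\E X_{T} \lesssim k (w/\delta)^{d-1-s}(w/\epsilon)^{s}$, which for $s < d-1$ is much smaller than the threshold $Mk(w/\epsilon)^{s}$. A Chernoff bound then yields $\tn(X_{T} > Mk(w/\epsilon)^{s}) \leq (w/\delta)^{cMk(w/\epsilon)^{s}(d-1-s)}$ (up to absorbed constants), and a union bound over a $w$-net of tubes in $[0,1]^{d}$ (cardinality $\lesssim w^{-C(d)}$) followed by dyadic summation over $w \in [\epsilon,\eta]$ drives the expected total number of thin-tube clusters to $\ll \epsilon^{-s}$ as soon as $k$ exceeds some $k_{s}$ depending only on $s$. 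I expect this step to be the main obstacle: the Chernoff exponent must dominate the net-size factor uniformly across all scales $w$, which is what ties down both the range of admissible $\eta$ and the correct dependence of the parameters on each other.

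Finally, pick $\eta \in (\rho,\delta)$ so that an arbitrary $\eta$-cube has expected count $\sim 1$ of members of $\calP$, and declare such a cube an \emph{$A$-cluster} if it contains $\geq A$ points of $\calP$, with $A = A(s,d)$ large. Poissonian tail estimates plus a union bound over an $\eta$-net bound the expected number of $A$-clusters by $\ll \epsilon^{-s}$. Remove one point per $w$-cluster and per $A$-cluster: after starting the construction with twice the target number of points per $R$, Markov's inequality gives a positive-probability realisation retaining $\geq (\delta/\epsilon)^{s}$ points per $R$, which we prune to exactly $(\delta/\epsilon)^{s}$. The thin range of (d) then holds by construction, and (c) is preserved because the cleanup only removes points. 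For the thick range $w \in (\eta,\delta]$, the tube $T$ meets $\leq k$ cubes of $\calU$ and, inside each such $R$, meets $\lesssim \delta w^{d-1}/\eta^{d}$ of the $\eta$-cubes, each carrying $\leq A$ surviving members of $\calP$; the total $\lesssim Ak\delta w^{d-1}/\eta^{d}$ collapses to $\lesssim k(w/\epsilon)^{s}$ precisely because $s < d-1$ and $\eta^{d} \sim \delta^{d-s}\epsilon^{s}$. This completes the verification of (a)--(d).
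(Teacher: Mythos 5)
Your stratified construction (one point per box of side $\sim\delta^{(d-s)/d}m^{-s/d}$, sampled independently from the central region) is a genuinely different route from the paper, which throws $m^{s}$ i.i.d.\ uniform points and then controls tube clusters by a first-moment count of $k\beta^{s}$-element subsets and cell occupancy by a delicate conditional-binomial comparison (Lemma \ref{mainClaim}); your independence across cells makes (b) and (c) deterministic and lets you use plain Chernoff bounds, which is attractive. However, as written there are two genuine gaps. First, the opening inequality of (d) demands the bound $\leq k$ with the \emph{same} $k$ as in the hypothesis; this exact fixed point is what allows Lemma \ref{mainLemma} to be iterated in the proof of Theorem \ref{main}, since the output family must again satisfy the hypothesis with the same $k$. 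Your cluster threshold $Mk(w/\epsilon)^{s}$ with a large $M=M(s,d)$ only yields that width-$2\epsilon$ tubes meet $\lesssim Mk$ cubes, and iterating would inflate the constant geometrically. The repair stays inside your framework: at the bottom scale take the threshold to be exactly $k$ (there $\E X_{T}\lesssim k(\epsilon/\delta)^{d-1-s}\ll k$, so the tail is $\leq (C\epsilon/\delta)^{(d-1-s)k}$ and the union bound over the $\sim\epsilon^{-2(d-1)}$ net tubes is beaten once $k(d-1-s)>2(d-1)$, the same condition as in the paper), and run the cleanup on suitably enlarged net tubes so that \emph{arbitrary} tubes of width $2\epsilon$ inherit the exact bound -- but this is not what your write-up proves.

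Second, the removal accounting is off by a factor $\delta^{-s}$: since every $R\in\calU$ must retain $(\delta/\epsilon)^{s}$ points, the budget for deletions is the per-cube surplus $\sim\delta^{s}\epsilon^{-s}$, not $\epsilon^{-s}$. For the thin-tube clusters this is harmless (your expected count in fact tends to $0$ as $m\to\infty$ for fixed $\delta,k$), but for the $A$-clusters the union bound over the $\sim\epsilon^{-s}$ cells of side $\eta$ gives an expectation $\sim\epsilon^{-s}(C/A)^{A}$, which is not $\ll\delta^{s}\epsilon^{-s}$ uniformly in $\delta$ unless $A\gtrsim\log(1/\delta)$; a $\delta$-dependent $A$ then ruins the thick-tube bound $\lesssim Ak(w/\epsilon)^{s}$ when the lemma is applied recursively with $\delta\to 0$. (This is precisely the obstruction the paper's Lemma \ref{mainClaim} is engineered to circumvent with an absolute $A$.) Fortunately, in your construction the entire $A$-cluster step is superfluous: the separation you already proved for (c) shows that every cube of side $\delta^{(d-s)/d}m^{-s/d}$ contains at most one sampled point \emph{deterministically}, so the thick-tube count $\lesssim k\delta w^{d-1}/\eta^{d}\lesssim k(w/\epsilon)^{s}$ needs no cleanup at all; delete that paragraph rather than patch it. A last, minor point: removing one point per cluster does not destroy a tube exceeding its threshold by more than one point; remove the excess (or argue via offending subsets, as the paper does) -- your tail bounds are strong enough to cover this.
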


For the remainder of Section \ref{technicalSection}, the parameters $\delta,\epsilon,k,m,s,\calQ,U$ and $\calU$ refer to those introduced in the statement of Lemma \ref{mainLemma}. The constant $\eta_{n + 1}$ discussed in the introduction is not explicitly mentioned, but it coincides with $\delta^{(d - s)/d}m^{-s/d}$. The proof idea is to pick $m^{s}$ points independently and uniformly at random inside $U$, for some large $m \in \N$, and call this random set $P_{0}$ (if $m^{s} \notin \N$, use $\ceil{m^{s}}$ instead). The set $P_{0}$ will play the role of the centres of the cubes in $\calQ$. In the interest of avoiding extra constants, we choose to ignore the issue of the points in $P_{0}$ being too close to the boundary of $U$ -- resulting in the risk that some cubes in $\calQ$ may not be entirely contained in the union of the cubes of $\calU$. The risk could be neutralised by first replacing the cubes in $\calU$ by slightly smaller ones and then choosing the points $P_{0}$ inside the new cubes.

\subsection{Preliminary considerations towards (c)} Divide each of the $\delta^{-s}$ cubes $R \in \mathcal{U}$ into a grid of $(\delta m)^{s}$ equally sized subcubes using equally spaced hyperplanes perpendicular to the coordinate axes (if $(\delta m)^{s} \notin \N$, use $\ceil{(\delta m)^{s}}$ instead). Denote the collections of cubes so obtained by $\calS_{R}$, $R \in \calU$. 
\begin{lemma}\label{mainClaim} Let $A \in \N$, and for each $R \in \calU$, consider the random variable
\begin{displaymath} X_{R} := \frac{1}{(\delta m)^{s}} \sum_{S \in \calS_{R}} \binom{|P_{0} \cap S|}{A}, \end{displaymath}
where $P_{0}$ is the random $m^{s}$-element subset of $U$, and we make the usual convention that the binomial coefficient is zero, when $A > |P_{0} \cap S|$. Then, 
\begin{displaymath} \tn\left\{\max_{R \in \calU} X_{R} \geq \frac{1}{10} \right\} < \frac{1}{10}, \end{displaymath}
if $A$ is large enough (but absolute), and $m \in \N$ is large enough (depending on $\delta$).
\end{lemma}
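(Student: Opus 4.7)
\textbf{Proof plan for Lemma \ref{mainClaim}.} My plan is to estimate $\mathbb{E}[X_R]$ and $\mathrm{Var}(X_R)$ separately, apply Chebyshev to each individual $R \in \calU$, and then take a union bound over the $\delta^{-s}$ cubes in $\calU$. The choice of $A$ is dictated by the expectation, while $m$ will be chosen large enough (depending on $\delta$) to drive the variance below the Chebyshev threshold.

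The first step is a combinatorial reinterpretation. Write $P_0 = \{p_1,\dots,p_{m^s}\}$ and, for each $A$-subset $J \subset \{1,\dots,m^s\}$, let $Y_{J,R} := \mathbf{1}\{\text{all points }p_j,\ j\in J,\text{ lie in a common cube }S \in \calS_R\}$. Since the subcubes of $\calS_R$ are pairwise disjoint, $\sum_{S \in \calS_R}\binom{|P_0 \cap S|}{A} = \sum_J Y_{J,R}$. Using that the $p_j$ are i.i.d.\ uniform on $U$ (so that each given point lies in a prescribed $S\in\calS_R$ with probability $|S|/|U| = m^{-s}$), one gets
\[
\mathbb{E}[Y_{J,R}] \;=\; \sum_{S\in\calS_R}\Bigl(\tfrac{1}{m^s}\Bigr)^{\!A} \;=\; (\delta m)^s m^{-sA}.
\]
Summing over the $\binom{m^s}{A}$ choices of $J$ and dividing by $(\delta m)^s$ gives $\mathbb{E}[X_R] \le \tfrac{1}{A!}$, with equality up to an $O_A(m^{-s})$ correction. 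I then fix $A$ absolute and large enough that $1/A! < 1/20$; $A = 4$ already suffices.

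For the variance, the crucial observation is that since the $p_j$ are \emph{independent}, two indicators $Y_{J_1,R}$ and $Y_{J_2,R}$ with $J_1 \cap J_2 = \emptyset$ depend on disjoint families of points and hence are independent, contributing $0$ to the covariance. Thus only pairs with $|J_1 \cap J_2| = r \in \{1,\dots,A\}$ contribute. For such pairs, $Y_{J_1,R}Y_{J_2,R} = 1$ forces all $2A-r$ involved points into a \emph{single} subcube, so $\mathbb{E}[Y_{J_1,R}Y_{J_2,R}] \le (\delta m)^s m^{-s(2A-r)}$; the number of such pairs is $\lesssim_A (m^s)^{2A-r}$. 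Multiplying and summing over $r$ gives $\mathrm{Var}\bigl(\sum_J Y_{J,R}\bigr) \lesssim_A (\delta m)^s$, and therefore $\mathrm{Var}(X_R) \lesssim_A (\delta m)^{-s}$.

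Finally, since $1/10 - \mathbb{E}[X_R] \gtrsim 1$ (by the choice of $A$), Chebyshev yields $\mathbb{P}(X_R \ge 1/10) \lesssim_A (\delta m)^{-s}$, and a union bound over the $\delta^{-s}$ cubes $R \in \calU$ gives
\[
\mathbb{P}\Bigl\{\max_{R\in \calU} X_R \ge \tfrac{1}{10}\Bigr\} \;\lesssim_{A}\; \delta^{-s}\cdot (\delta m)^{-s} \;=\; \frac{1}{\delta^{2s}\, m^s},
\]
which is $<1/10$ once $m$ is large depending on $\delta$ and $s$. The part I expect to require the most care is the variance bookkeeping, namely keeping track that the only nonzero covariances come from overlapping $J_1, J_2$, and that the gain of a factor $m^{-s}$ from forcing $J_1\cup J_2$ into a single subcube exactly cancels the combinatorial count of such pairs up to a factor of $(\delta m)^s$; everything else is straightforward.
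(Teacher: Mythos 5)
Your argument is correct, and it takes a genuinely different---and more elementary---route than the paper. The paper's difficulty is that the occupancy counts $|P_{0} \cap S|$, $S \in \calS_{R}$, are not independent; it handles this by conditioning on $|P_{0} \cap R| = k$ (with the tail $k > 2(\delta m)^{s}$ disposed of separately), proving that the conditional probability of any occupancy configuration is dominated by the corresponding probability for independent $\mathrm{Bin}(k, 2e|S|/|R|)$ variables $Z_{S}$, and then running the expectation/variance estimate for the independent sum $\sum_{S} \binom{Z_{S}}{A}$. You sidestep the dependence issue entirely by rewriting $\sum_{S \in \calS_{R}} \binom{|P_{0} \cap S|}{A}$ as a sum over $A$-element index sets $J$ of the indicators $Y_{J,R}$, and exploiting independence of the underlying \emph{points} rather than of the cube counts: indicators with disjoint index sets are exactly independent, and for overlapping pairs with $|J_{1} \cap J_{2}| = r \geq 1$ the event $Y_{J_{1},R}Y_{J_{2},R} = 1$ forces all $2A - r$ points into a single subcube (the shared points lie on a boundary common to two subcubes only with probability zero), so the collision probability $(\delta m)^{s} m^{-s(2A - r)}$ offsets the $\lesssim_{A} (m^{s})^{2A - r}$ count of such pairs; this yields $\Var(X_{R}) \lesssim_{A} (\delta m)^{-s}$, and with $\E[X_{R}] \leq 1/A! < 1/20$ Chebyshev plus the union bound over the $\delta^{-s}$ cubes $R \in \calU$ finishes, for $m$ large depending on $\delta$ and the absolute constant $A$. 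Your second-moment (U-statistic) computation is shorter and avoids both the conditioning step and the comparison with independent binomials; the paper's domination argument is heavier but controls the full joint law of the occupancy configuration, which is more information than this lemma actually needs. A cosmetic remark: the lemma only asks for some sufficiently large absolute $A$, and your condition $1/A! < 1/20$ (e.g. $A \geq 4$) is weaker than the paper's requirement $(4e^{2}/A)^{A} \leq 10^{-2}$, which is perfectly acceptable here.
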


\begin{proof}[Proof of Lemma \ref{mainClaim}] The plan is to use the union bound
\begin{displaymath} \tn\left\{\max_{R \in \calU} X_{R} \geq \frac{1}{10}\right\} \leq \sum_{R \in \calU} \tn\{X_{R} \geq 1/10\} \end{displaymath}
in the end, so we need to show that $\tn\{X_{R} \geq 1/10\}$ can be pushed smaller than $1/ (10 \card \calU) = \delta^{s}/10$ by taking $A$ and $m$ large enough. First, we observe that $X_{R}$ is the average over the random variables
\begin{displaymath} Y_{S} := \binom{|P_{0} \cap S|}{A}, \end{displaymath}
so we wish to see that (a) the expectation of the variables $Y_{S}$ can be made small by taking $A$ large, and (b) the average $X_{R}$ is strongly concentrated around the expectation. The only problem in (b) is that the random variables $Y_{S}$ are not independent: in fact, if $Y_{S}$ is large for some particular $S \subset R$, then there are unexpectedly many elements of $P_{0}$ packed inside $S$, and this makes it less likely that $Y_{S'}$ is large for $S' \neq S$. In fact, this observation is the key to the proof.

Fixing $R \in \calU$, we make the preliminary estimate
\begin{align} \tn & \left\{X_{R} \geq \frac{1}{10}\right\} \leq \tn \left\{|P_{0} \cap R| > 2(\delta m)^{s} \right\}\notag\\
&\label{form12} \quad\quad + \tn \left\{X_{R} \geq \frac{1}{10} \text{ and } |P_{0} \cap R| \leq 2(\delta m)^{s} \right\}. \end{align}
Since the random variable $|P_{0} \cap R|$ is distributed $\text{Bin}(m^{s},\delta^{s})$, the probability of the first summand tends to zero as $m \to \infty$. So, we are left to deal with the second summand. This probability can be further expressed as the following sum of conditional probabilities:
\begin{equation}\label{fixk} \eqref{form12} = \sum_{k = 0}^{2(\delta m)^{s}} \tn \left\{X_{R} \geq \frac{1}{10} \;\Big|\; |P_{0} \cap R| = k \right\}\tn\left\{|P_{0} \cap R| = k\right\}. \end{equation}
So, we aspire to estimate 
\begin{displaymath} \tn_{k} \left\{X_{R} \geq \frac{1}{10} \right\}, \qquad k \leq 2(\delta m)^{s}, \end{displaymath}
where $\tn_{k}\{\cdots\} = \tn\{\cdots \mid |P_{0} \cap R| = k\}$. Now, even though $X_{R}$ was initially defined on the probability space associated with the entire set $P_{0}$, the conditioning on "$|P_{0} \cap R| = k$" allows us to restrict attention inside the cube $R$. More precisely, we can consider the new probability space associated with "throwing $k$ points uniformly and independently at random inside $R$" and observe that the distribution of $X_{R}$ with respect to this new (unconditional) probability equals the $\tn_{k}$-distribution of $X_{R}$. The random $k$-element subset of $R$ will be denoted by $P_{0}^{k}$, and we will keep the notation $\tn_{k}$ for the probability measure associated with $P_{0}^{k}$.

We start to bound the probability $\tn_{k}[X_{R} \geq 1/10]$. If $\calS \subset \calS_{R}$ is a collection of cubes, and $(r_{S})_{S \in \calS}$ is a collection of natural numbers, denote by $E((r_{S})_{S \in \calS})$ the event
\begin{displaymath} E((r_{S})_{S \in \calS}) = \left\{|P_{0}^{k} \cap S| = r_{S} : S \in \calS\right\}. \end{displaymath}
Then the event $\{X_{R} \geq 1/10\}$ is contained in the finite disjoint union of events of the form $E((r_{S})_{S \in \calS})$ such that $r_{S} \geq A$ for all $S \in \calS$, and
\begin{equation}\label{sumrS} \sum_{S \in \calS} \binom{r_{S}}{A} \geq \frac{(\delta m)^{s}}{10}. \end{equation}
Fix one such event $E((r_{S})_{S \in \calS})$, and enumerate the cubes in $\calS$ by writing $\calS = \{S_{1},\ldots,S_{N}\}$. Then
\begin{align}\label{tnE} \tn_{k} & [E((r_{S})_{S \in \calS})] = \tn_{k}[\{|P_{0}^{k} \cap S_{1}| = r_{S_{1}}\} \cap \ldots \cap \{|P_{0}^{k} \cap S_{N}| = r_{S_{N}}\}]\\
& = \prod_{n = 1}^{N} \tn_{k}[\{|P_{0}^{k} \cap S_{n}| = r_{S_{n}}\} \mid \{|P_{0}^{k} \cap S_{n + 1}| = r_{S_{n + 1}}\} \cap \ldots \cap \{|P_{0}^{k} \cap S_{N}| = r_{S_{N}}\}], \notag \end{align}
simply by iterating the definition of conditional probability. Let us fix $n \in \{1,\ldots,N\}$ and study the $n^{th}$ factor of the product. The distribution of the random variable $|P_{0}^{k} \cap S_{n}|$ with respect to the conditional probability measure
\begin{displaymath} \tn_{k}[ \: \cdots \mid \{|P_{0}^{k} \cap S_{n + 1}| = r_{S_{n + 1}}\} \cap \ldots \cap \{|P_{0}^{k} \cap S_{N}| = r_{S_{N}}\}] \end{displaymath}
is a binomial one, more precisely
\begin{displaymath} \text{Bin}\left(k - \sum_{q = n + 1}^{N} r_{S_{q}}, \frac{|S_{n}|}{|R| - (N - n)|S_{n}|} \right). \end{displaymath}
Indeed, given the information that $r_{S_{n + 1}} + \ldots + r_{S_{N}}$ points are contained in the cubes $S_{n + 1},\ldots,S_{N}$, we are left with $k - r_{S_{n + 1}} - \ldots - r_{S_{N}}$ points to choose randomly inside the set 
\begin{displaymath} R \setminus \bigcup_{q = n + 1}^{N} S_{q}, \end{displaymath}
and the probability that any of these points should land in $S_{n}$ is exactly the "success probability" of the binomial distribution above. Now, we wish to bound the probability that such a random variable takes the value $r_{S_{n}}$. Applying the bound $\binom{n}{k} \leq (e\cdot n/k)^{k}$, this probability can be estimated as
\begin{align} \binom{k - \sum_{q = n + 1}^{N} r_{S_{q}}}{r_{S_{n}}} & \left(\frac{|S_{n}|}{|R| - (N - n)|S_{n}|}\right)^{r_{S_{n}}}\left(1 - \frac{|S_{n}|}{|R| - (N - n)|S_{n}|}\right)^{k - \sum_{q = n}^{N} r_{S_{q}}} \notag\\
& \leq \frac{e^{r_{S_{n}}}\left(k - \sum_{q = n + 1}^{N} r_{S_{q}}\right)^{r_{S_{n}}}}{r_{S_{n}}^{r_{S_{n}}}}\left(\frac{|S_{n}|}{|R| - (N - n)|S_{n}|}\right)^{r_{S_{n}}} \notag \\
& = \frac{(e|S_{n}|)^{r_{S_{n}}}}{r_{S_{n}}^{r_{S_{n}}}} \cdot \left(\frac{k - \sum_{q = n + 1}^{N} r_{S_{q}}}{|R| - (N - n)|S_{n}|}\right)^{r_{S_{n}}} \notag \\
&\label{form8} \leq \frac{(e|S_{n}|)^{r_{S_{n}}}}{r_{S_{n}}^{r_{S_{n}}}} \cdot \left(\frac{k - (N - n)A}{|R| - (N - n)|S_{n}|}\right)^{r_{S_{n}}}, \end{align}
using the assumption that $r_{S} \geq A$ for all $S \in \calS$. Next, a simple manipulation shows that
\begin{displaymath} \frac{k - (N - n)A}{|R| - (N - n)|S_{n}|} \leq \frac{k}{|R|}, \end{displaymath}
as soon as $A \geq k|S_{n}|/|R|$, which is true as soon as $A \geq 2$, since $k \leq 2(\delta m)^{s} = 2|R|/|S_{n}|$. Plugging this estimate into \eqref{form8} leads to
\begin{equation}\label{form9} \eqref{form8} \leq \frac{k^{r_{S_{n}}}}{r_{S_{n}}^{r_{S_{n}}}} \cdot \left(\frac{e|S_{n}|}{|R|}\right)^{r_{S_{n}}} \leq \binom{k}{r_{S_{n}}} \cdot \left(\frac{e|S_{n}|}{|R|}\right)^{r_{S_{n}}}. \end{equation}
This form is almost what we were looking for. The final observation is that
\begin{displaymath} \left(1 - \frac{2e|S_{n}|}{|R|} \right)^{k - r_{S_{n}}} \geq \left(1 - \frac{2e|S_{n}|}{|R|} \right)^{2(\delta m)^{s}} \geq \tau \end{displaymath}
for some absolute constant $\tau > 0$, since $|S_{n}|/|R| = (\delta m)^{-s}$. Thus, by taking $A$ so large that $2^{r_{S_{n}}} \geq 2^{A} \geq 1/\tau$, the estimate in \eqref{form9} can be taken one step further as follows:
\begin{displaymath} \eqref{form9} \leq \binom{k}{r_{S_{n}}} \cdot \left(\frac{2e|S_{n}|}{|R|} \right)^{r_{S_{n}}} \cdot \left(1 - \frac{2e|S_{n}|}{|R|}\right)^{k - r_{S_{n}}}. \end{displaymath}
Now, we look back to \eqref{tnE}. The probability of the event $E((r_{S})_{S \in \calS})$ was factorised into a product of $N$ numbers, and we have just obtained an estimate for each of these. Consequently,
\begin{equation}\label{form10} \tn_{k} [E((r_{S})_{S \in \calS})] \leq \prod_{n = 1}^{N} \binom{k}{r_{S_{n}}} \cdot \left(\frac{2e|S_{n}|}{|R|} \right)^{r_{S_{n}}} \cdot \left(1 - \frac{2e|S_{n}|}{|R|}\right)^{k - r_{S_{n}}}. \end{equation}
The estimate \eqref{form10} is useful once interpreted correctly. To do this, we associate to each cube $S \in \calS_{R}$ an independent copy of an abstract $\sim \text{Bin}(k,2e|S|/|R|)$ distributed random variable, which we denote by $Z_{S}$.\footnote{So, the variable $Z_{S}$ does not actually have anything to do with the cube $S \in \calS_{R}$, but such an indexing is handy nevertheless.} Given any collection of cubes $\calS = \{S_{1},\ldots,S_{N}\} \subset \calS_{R}$, and any natural numbers $r_{S_{n}} \in \{1,\ldots,k\}$ for $1 \leq n \leq N$, the probability of the event
\begin{displaymath} E_{Z}((r_{S})_{S \in \calS}) = \{Z_{S_{n}} =  r_{S_{n}} \text{ for all } 1 \leq n \leq N\} \end{displaymath}
equals the right hand side of \eqref{form10}. Moreover, assuming that the numbers $r_{S_{n}}$, $1 \leq n \leq N$ satisfy \eqref{sumrS}, the event $E_{Z}((r_{S})_{S \in \calS})$ is contained in
\begin{displaymath} \left\{\frac{1}{(\delta m)^{s}} \sum_{S \in \calS_{R}} \binom{Z_{S}}{A} \geq \frac{1}{10} \right\}. \end{displaymath}
The conclusion is that 
\begin{equation}\label{form11} \tn_{k}\left\{X_{R} \geq \frac{1}{10}\right\} \leq \tilde{\tn}\left\{\frac{1}{(\delta m)^{s}} \sum_{S \in \calS_{R}} \binom{Z_{S}}{A} \geq \frac{1}{10}\right\}, \end{equation}
where we used $\tilde{\tn}$ to denote the abstract probability measure associated with the random variables $Z_{S}$. Let us sum up the argument that lead to this conclusion: the event on the left hand side of \eqref{form11} can be expressed as the union of events of the form $E((r_{S})_{S \in \calS})$, where the numbers $r_{S}$ satisfy $r_{S} \geq A$ and \eqref{sumrS}. Then, the $\tn_{k}$-probability of any such event is bounded by the $\tilde{\tn}$-probability of the corresponding event $E_{Z}((r_{S})_{S \in \calS})$ by \eqref{form10}. Finally, the (disjoint) union of these events is contained in the event on the right hand side of \eqref{form11}.

Next, we plan to apply Markov's inequality to show that for $A$ large enough but absolute, and for $m$ large enough depending on $\delta$, the right hand side probability in \eqref{form11} is less than $\delta^{s}/20$. So, we need to estimate the expectation and variance of the random variables $\binom{Z_{S}}{A}$. The expectation can be calculated rather explicitly by viewing $\binom{Z_{S}}{A}$ as the sum of certain other random variables (this may seem complicated, but thinking along these lines will also be useful later on in the paper). Fix a cube $U \subset R$ of volume $|U| = 2e|S|$. Then, the $\tn_{k}$-distribution of $|P_{0}^{k} \cap U|$ is $\sim \text{Bin}(k,|U|/|R|)$, which is the same as the $\tilde{\tn}$-distribution of $Z_{S}$. Consequently, the $\tn_{k}$-distribution of the random variable
\begin{displaymath} \sum_{P \in \calP^{A}} \1_{\{P \subset U\}} = \binom{|P_{0}^{k} \cap U|}{A} \end{displaymath}
is the same as the $\tilde{\tn}$-distribution of the random variable $\binom{Z_{S}}{A}$, where $\calP^{A}$ stands for the collection of all $A$-element subsets of $P_{0}^{k}$. In particular, these random variables have common expectation, which equals
\begin{displaymath} \sum_{P \in \calP^{A}} \tn_{k}\{P \subset U\}. \end{displaymath}
The $\tn_{k}$-probability that any fixed $A$-element subset $P \subset P_{0}^{k}$ is contained in $U$ is
\begin{displaymath} \tn_{k}\{P \subset U\} = \left(\frac{|U|}{|R|}\right)^{A} = (2e)^{A}(\delta m)^{-As}. \end{displaymath}
Since $\card \calP^{A} = \binom{k}{A}$, we conclude that
\begin{displaymath} \tilde{\E}\left[\binom{Z_{S}}{A}\right] = \binom{k}{A}(2e)^{A}(\delta m)^{-As} \leq \left(\frac{e \cdot 2(\delta m)^{s}}{A}\right)^{A}(2e)^{A}(\delta m)^{-As} = \left(\frac{4e^{2}}{A}\right)^{A} \leq \frac{1}{100}. \end{displaymath}
for a large enough absolute choice of $A \in \N$.

Next, we consider the variance of $\binom{Z_{S}}{A}$. Estimating crudely,
\begin{displaymath} \widetilde{\Var}\left[\binom{Z_{S}}{A}\right] \leq \tilde{\E}\left[\binom{Z_{S}}{A}^{2}\right] \lesssim_{A} \tilde{\E}[Z_{S}^{2A}] = \int_{0}^{\infty} t^{2A - 1}\tilde{\tn}\{Z_{S} \geq t\} \, dt. \end{displaymath}
Recalling that $Z_{S} \sim \text{Bin}(k,2e|S|/|R|)$, where $k \leq 2|R|/|S|$, it is easy to check (using directly the formula for the probability density function of $Z_{S}$) that the integral above admits a bound $C_{A} < \infty$ depending only on $A$. 

Since the random variables $\binom{Z_{S}}{A}$ are independent for various $S \in \calS_{R}$, the variance of their sum is the sum of their variance. Hence, Markov's inequality is a useful tool for bounding the probabilities related to the average
\begin{displaymath} \tilde{X}_{R} := \frac{1}{(\delta m)^{s}} \sum_{S \in \calS_{R}} \binom{Z_{S}}{A}. \end{displaymath}
Recalling that $\tilde{\E}[\tilde{X}_{R}] = \tilde{\E}[Z_{S}] \leq 1/100$ for large enough $A$, we obtain
\begin{align*} \tilde{\tn}\{\tilde{X}_{R} \geq 1/10\} & \leq \tilde{\tn}\{\tilde{X}_{R} - \E_{k}[\tilde{X}_{R}] \geq 1/100]\}\\
& \leq \tilde{\tn}\{(\tilde{X}_{R} - \E_{k}[\tilde{X}_{R}])^{2} \geq 10^{-4}\}\\
& \leq 10^{4} \cdot \widetilde{\Var}[\tilde{X}_{R}] = \frac{10^{4}}{(\delta m)^{s}} \widetilde{\Var}[Z_{S}] \leq \frac{10^{4}C_{A}}{(\delta m)^{s}}. \end{align*} 
Taking $m$ large enough (depending on $\delta$) and using \eqref{form11}, this gives
\begin{displaymath} \tn_{k}\{X_{R} \geq 1/10\} \leq \tilde{\tn}\{\tilde{X}_{R} \geq 1/10\} < \delta^{s}/20. \end{displaymath}
This holds uniformly for $0 \leq k \leq 2(\delta m)^{s}$, so the sum in \eqref{fixk} is also $< \delta^{s}/20$. Finally (as discussed above \eqref{fixk}), taking $m$ so large that also $\tn\{|P_{0} \cap R| > 2(\delta m)^{s}\} < \delta^{s}/20$, we obtain $\tn\{X_{R} \geq 1/10\} < \delta^{s}/20$, and finally
\begin{displaymath} \tn\left\{\max_{R \in \calU} X_{R} \geq \frac{1}{10} \right\} < \frac{1}{10}. \end{displaymath}
This concludes the proof of the lemma. \end{proof}


Now we are prepared for the proof of the main lemma.

\begin{proof}[Proof of Lemma \ref{mainLemma}] Informally speaking, the main question to answer is the following: "In expectation, how many large subsets of $P_{0}$ land in tubes $T$ of width $1/m \leq w(T) \leq \delta$?" Now, we set to formalise and answer this question.

Fix a number $\beta = 2^{j}$, $j \geq 0$, such that $1/m \leq \beta/m \leq \delta/r^{2}$, where $r \geq 1$ is a constant depending only on the dimension $d$, the meaning of which will be clarified soon. If $T \subset \R^{d}$ is a tube and $h > 0$, denote by $hT$ the tube with the same central line as $T$ but with $w(hT) = hw(T)$. We know that any tube $r^{2}T$ with $w(T) = \beta/m$ has width $\leq \delta$, hence meets at most $k$ cubes in $\calU$ by assumption, so if $p$ is a random point in $U$, we have
\begin{displaymath} \tn\{p \in 2r^{2}T\} = \frac{|2r^{2}T \cap U|}{|U|} \leq C\frac{k \cdot \delta \cdot (\beta/m)^{d - 1}}{\delta^{d-s}} = Ck\delta^{s - d + 1} \cdot \left(\frac{\beta}{m}\right)^{d - 1} \end{displaymath}
for some suitable absolute constant $C \geq 1$.

The purpose of the constant $r \geq 1$ is the following. Assume that we have already determined the value of $\epsilon \sim 1/m$ (the side-length of the cubes in $\calQ$), and imagine placing cubes of side-length $\epsilon$ centred at each point in $P_{0}$: denote these cubes by $\calQ_{0}$. We require $r$ to be so large that the following conditions are satisfied: $r \geq 2\epsilon m$, and if $w(T) \geq 1/m$, and $T$ intersects one of the cubes in $\calQ_{0}$, then, $rT$ contains the centre, a point in $P_{0}$. These conditions are satisfied by a large constant $r$, the size of which depends only on $d$ and the \textbf{absolute} constants in $\epsilon \sim 1/m$. As a consequence of the second condition, if $T$ is a tube of width $w(T) = \beta/m$, which meets $> k \cdot (m w(T))^{s} = k \cdot \beta^{s}$ cubes in $\calQ_{0}$ for some (large) $k$, it follows that $rT$ must contain a $k \cdot \beta^{s}$ element subset of $P_{0}$, and we wish to estimate the probability of this event. For technical reasons, however, we choose to estimate the probability of $\geq k \cdot \beta^{s}$ points being contained in $r^{2}T$ instead of $rT$. 

Write $q = k \cdot \beta^{s}$ and assume that $q \in \N$ (if not, everything below would verbatim with $q := \ceil{k \cdot \beta^{s}}$ instead). Given any random $q$-element subset $\{p_{1},\ldots,p_{q}\}$ of $U$, we have
\begin{equation}\label{form13} \tn\{\{p_{1},\ldots,p_{q}\} \subset 2r^{2}T\} \leq (Ck\delta^{s - d + 1})^{q} \cdot \left(\frac{\beta}{m}\right)^{q(d - 1)}. \end{equation}
Next, we record the following simple geometric fact: for any $0 < \tau \leq 1$, one can pick $\sim \tau^{-2(d - 1)}$ "representative" tubes of width $2\tau$ such that the intersection of an arbitrary tube of width $\tau$ with $[0,1]^{d}$ is contained in one of these representatives. Applying \eqref{form13} to each of these "representative" tubes (with $\tau = \beta/m$) and using the union bound yields
\begin{displaymath} \tn\{\{p_{1},\ldots,p_{q}\} \subset r^{2}T \text{ for some } T \text{ with } w(T) = \beta/m\} \leq (Ck\delta^{s - d + 1})^{q} \cdot \left(\frac{\beta}{m}\right)^{(q - 2)(d - 1)}. \end{displaymath}
Let us emphasise that $T$ above stands for an arbitrary tube, and the notion of "representatives" was only used as a tool to reach the bound. 

Recall that $P_{0}$ was a random $m^{s}$-element subset of $U$. Write $P_{0} := \{p_{1},\ldots,p_{m^{s}}\}$, and denote by $\calP_{q}$ all the $q$-element subsets of $\{1,\ldots,m^{s}\}$. Given $B \in \calP_{q}$, let $B(P_{0}) := \{p_{j} : j \in B\}$. This is a random $q$-element subset of $U$, and the probabilistic bound above can be applied as follows: 
\begin{align} \E & \left[\sum_{B \in \calP_{q}} \1_{\{B(P_{0}) \subset r^{2}T  \text{ for some } T \text{ with } w(T) = \beta/m\}} \right] \notag \\
& = \sum_{B \in \calP_{q}} \tn\{B(P_{0}) \subset r^{2}T  \text{ for some } T \text{ with } w(T) = \beta/m\} \notag \\
& \leq \binom{m^{s}}{q}(Ck\delta^{s - d + 1})^{q} \cdot \left(\frac{\beta}{m}\right)^{(q - 2)(d - 1)} \notag \\
& \leq (eCk\delta^{s - d + 1})^{q} \cdot \left(\frac{m^{s}}{q}\right)^{q} \cdot \left(\frac{\beta}{m}\right)^{(q - 2)(d - 1)} \notag \\
& = (eC\delta^{s - d + 1})^{q} \cdot \left(\frac{m^{s}}{\beta^{s}}\right)^{q} \cdot \left(\frac{\beta}{m}\right)^{(q - 2)(d - 1)} \notag \\
& = (eC\delta^{s - d + 1})^{q} \cdot \left(\frac{\beta}{m}\right)^{q(d - 1 - s) - 2(d - 1)} \notag \\
&\label{form2} =: D^{k \cdot \beta^{s}} \cdot \left(\frac{\beta}{m}\right)^{k \cdot \beta^{s}(d - 1 - s) - 2(d - 1)}. \end{align}
Next, for reasons to become apparent shortly, we wish to estimate the sum of the numbers in \eqref{form2} over $\beta = 2^{j}$ such that $\beta/m \leq \delta^{(d - s)/d}m^{-s/d}$, that is, for
\begin{displaymath} \beta \leq \delta^{(d - s)/d}m^{1 - s/d} \leq m^{1 - s/d}. \end{displaymath}
Observe that then $\beta/m \leq \delta/r^{2}$ for large enough $m$, which was necessary for the estimates above. We are free to choose $k = k_{s} \in \N$ at will, and the first requirement we place is that
\begin{displaymath} k(d - 1 - s) - 2(d - 1) > 0. \end{displaymath}
Then also $k \cdot \beta^{s}(d - 1 - s) - 2(d - 1) > 0$ for all $\beta = 2^{j} \geq 1$, so we may plug in the upper bound for $\beta$ to obtain
\begin{align*} \sum_{\beta = 2^{j} = 1}^{m^{1 - s/d}} D^{k \cdot \beta^{s}} \left(\frac{\beta}{m}\right)^{k \cdot \beta^{s}(d - 1 - s) - 2(d - 1)} \leq \sum_{\beta = 2^{j} = 1}^{m^{1 - s/d}} D^{k \cdot \beta^{s}} \cdot m^{-sk\cdot \beta^{s}(d - 1 - s)/d + 2s(d - 1)/d}. \end{align*}
Next, we note that the various numbers $\beta^{s} = 2^{js}$ are separated by $\geq 1$ for $j$ large enough (depending on $s$), so we may replace the original summation over $\beta = 2^{j} \leq m^{1 - s/d}$ by a summation over $\beta \in \N$, with the gain of replacing $\beta^{s}$ by $\beta$ in the process. This may cost us a multiplicative constant $C_{s} \geq 1$ depending on $s$. The result is the following geometric sum:
\begin{align} \ldots \leq C_{s}\sum_{\beta = 1}^{\infty} & D^{k \cdot \beta} \cdot m^{-sk \cdot \beta(d - 1 - s)/d + 2s(d - 1)/d} \notag\\
& \leq m^{2s(d - 1)/d} \cdot C_{s} \sum_{\beta = 1}^{\infty} \left(D^{k} \cdot m^{-sk(d - 1 - s)/d}\right)^{\beta} \notag \\
&\label{form3} =  \frac{C_{s} \cdot D^{k} \cdot m^{-sk(d - 1 - s)/d}}{1 - D^{k} \cdot m^{-sk(d - 1 - s)/d}} \cdot m^{2s(d - 1)/d} \notag\\
& = \left[\frac{C_{s} \cdot D^{k} \cdot m^{-sk(d - 1 - s)/d + s(1 - 2/d)}}{1 - D^{k} \cdot m^{-sk(d - 1 - s)/d}} \right] \cdot m^{s},  \end{align}
where the upshot is that the factor in front of of $m^{s} = |P_{0}|$ can be made arbitrarily small by choosing $m$ large enough (depending on $\delta$ via the definition of $D$), and taking $k = k_{s}$ so large that $s(1 - 2/d) - sk(d - 1 - s)/d < 0$.

Let us sum up what we have gained so far. Choosing $m \in \N$ large enough depending on $\delta$, $k$ large enough depending on $s$ and $A \geq 1$ large enough but absolute, we can now conclude (see explanations below) that the following three events each hold with probability at least $9/10$:
\begin{equation}\label{form5} \min_{R \in \calU} |P_{0} \cap R| \geq \frac{(\delta m)^{s}}{2}, \end{equation}
\begin{equation}\label{form4a} \max_{R \in \calU} \sum_{S \in \calS_{R}} \sum_{P \in \calP_{A}} \1_{\{P \subset S\}} \leq \frac{(\delta m)^{s}}{8}, \end{equation}
and
\begin{equation}\label{form4} \sum_{\beta = 2^{j} = 1}^{(\delta m)^{1 - s/d}} \sum_{B \in \calP_{k \cdot \beta^{s}}} \1_{\{B(P_{0}) \subset r^{2}T \text{ for some } T \text{ with } w(T) = \beta/m\}} \leq \frac{(\delta m)^{s}}{8}, \end{equation}
where $\calS_{R}$ in \eqref{form4a} stands for the grid of cubes of side-length $\delta^{(d - s)/d}m^{-s/d}$ defined above Lemma \ref{mainClaim}. First, \eqref{form5} has high probability (when $m = m_{\delta}$ is large enough) simply because the random variables $|P_{0} \cap R|$, $R \in \calU$, are distributed $\sim \text{Bin}(m^{s},\delta^{s})$, and $\card \calU = \delta^{-s}$ does not grow as $m \to \infty$. Second, the fact that \eqref{form4a} holds with high probability is just another way of writing the conclusion of Claim \ref{mainClaim}, since
\begin{displaymath} \sum_{P \in \calP_{A}} \1_{\{P \subset S\}} = \binom{|P_{0} \cap S|}{A}. \end{displaymath}
Third, the situation that \eqref{form4} has probability $\geq 9/10$ can be reached by taking $m = m_{\delta}$ and $k = k_{s}$ so large that the expectation of the sum in \eqref{form4} is bounded by $(\delta m)^{s}/100$: this is possible by the bound \eqref{form3}.

Since the three events corresponding to \eqref{form5}--\eqref{form4} each hold with probability $\geq 9/10$, all of them hold simultaneously with positive probability. So, we can and will choose a set $P_{0}$ satisfying all three conditions. Next, to obtain a "regularised" subset $\tilde{P} \subset P_{0}$, we execute the following point removal process (PRP):
\begin{itemize}
\item[(i)] From each $A$-element subset of $P_{0}$ contained in a cube $S \in \calS_{R}$, for any $R \in \calU$, remove one point. 
\item[(ii)] For all $\beta = 2^{j} \in [1,(\delta m)^{1 - s/d}]$ and from all $(k \cdot \beta^{s})$-element subsets contained in a tube $r^{2}T$ with $w(T) = \beta/m$, remove one point.
\end{itemize}
The remaining set is denoted by $\tilde{P}$. The first observation is that 
\begin{displaymath} |\tilde{P} \cap R| \geq \frac{(\delta m)^{s}}{4} \end{displaymath}
for every cube $R \in \calU$. Indeed, by \eqref{form4a}, the number of $A$-element subsets contained in some cube $S \in \calS_{R}$ does not exceed $(\delta m)^{s}/8$ for any $R \in \calU$, so PRP(i) deletes at most $(\delta m)^{s}/8$ points inside each $R \in \calU$. By \eqref{form4}, PRP(ii) deletes at most $(\delta m)^{s}/8$ points from $P_{0}$ altogether. Since $|P_{0} \cap R| \geq (\delta m)^{s}/2$ for all $R \in \calU$ by \eqref{form5}, the claim follows.

\subsection{Conditions (a)--(c)} Next, we will remove some further points from $\tilde{P}$ to ensure that (a)--(c) are satisfied. We already know by PRP(i) that each of the cubes $S \in \calS_{R}$ of side-length $\delta^{(d - s)/d}m^{-s/d}$ contains fewer than $A$ points in $\tilde{P}$. It follows that we may remove points from $\tilde{P}$ until the following two conditions are met: the number of points remaining in each cube $R \in \calU$ is $\gtrsim (\delta m)^{s}/A$, and the pairwise distance between the remaining points is at least
\begin{equation}\label{distance} 5d \cdot \delta^{(d - s)/d}m^{-s/d}. \end{equation}
As a final "regularisation", we remove yet more points in order make sure that each cube $R \in \calU$ contains the same number, say $N$, of points, and this number satisfies $(\delta m)^{s}/A \lesssim N \leq (\delta m)^{s}$. The subset of $\tilde{P}$ so obtained is called $P$.

The side-length $\epsilon \sim 1/m$ of the cubes in $\calQ$, centred at the points in $P$, is now determined by the requirement (b), saying that each cube $R \in \calU$ should contain $(\delta/\epsilon)^{s}$ cubes in $\calQ$. Since each such $R$ contains $N \gtrsim (\delta m)^{s}/A$ points of $P$, and $A$ is an absolute constant, we may choose $\epsilon \sim 1/m$ so that $(\delta/\epsilon)^{s} = N$. In particular, $1/\epsilon \leq m$, since $(\delta/\epsilon)^{s} = N \leq (\delta m)^{s}$.

It remains to verify that $\calQ$ satisfies the requirements (c) and (d). Condition (c), that an arbitrary cube of side-length $\delta^{(d - s)/d}m^{-s/d}$ intersects at most one the cubes in $\calQ$, follows immediately from \eqref{distance}, and the fact that $\epsilon \sim 1/m$ is far smaller than $5d \cdot \delta^{(d - s)/d}m^{-s/d}$ for large $m \in \N$. This also implies that the cubes in $\calQ$ are all disjoint.

\subsection{Condition (d)} To prove (d), we split into three cases. First, every tube $T$ of width $w(T) = 2\epsilon$ intersects fewer than $k$ cubes $Q$. Otherwise $rT$ would contain $k$ points in $P$ by the choice of $r$, and we could pick a tube $T'$ with $w(T') = 1/m$, $T = (2\epsilon m)T'$, so that a $k$-element subset of $P$ is contained in $r^{2}T' \supset rT$ (using $r \geq 2\epsilon m)$. This would contradict the PRP in the case $\beta = 1$. 

Second, fix any tube $T$ with $\epsilon \leq w(T) \leq \delta^{(d - s)/d}m^{-s/d}$ and locate $\beta = 2^{j} \in [1,(\delta m)^{(d - s)/d}]$ with the property that $ \beta \leq w(T)/\epsilon \leq 2\beta$ (here we need that $1/\epsilon \leq m$). By PRP(ii), every tube $rT'$ with $w(T') = \beta/m$ can only contain $\leq k \cdot \beta^{s}$ points in $P$. Since $w(T) \lesssim \beta/m$, it follows that $rT$ can also contain at most $\lesssim k \cdot \beta^{s}$ points in $P$. Then, by the choice of $r$, the tube $T$ can only meet $\lesssim k \cdot \beta^{s}$ cubes in $\calQ$.

Finally, fix a tube $T$ with $\delta^{(d - s)/d}m^{-s/d} \leq w(T) \leq \delta$. Then, since the cubes $S \in \calS_{R}$, $R \in \calU$, have side-length $\delta^{(d - s)/d}m^{-s/d} \leq w(T)$, we see that each $S$ intersecting $2T$ is contained in $r_{d}T$ for a suitable dimensional constant $r_{d} \geq 1$. Fixing any one cube $R \in \calU$, we obtain
\begin{align*} \delta^{d - s}m^{-s} & \cdot \card\{S \in \calS_{R} : S \cap 2T \neq \emptyset\}\\
& \leq |S| \cdot \card\{S \in \calS_{R} : S \subset r_{d}T\}\\
& \leq |r_{d}T \cap R| \lesssim \delta \cdot w(T)^{d - 1}. \end{align*} 
Since $2T$ can only intersect at most $k$ cubes $R \in \calU$, it follows that
\begin{align*} \card\left\{S \in \bigcup_{R \in \calU} \calS_{R} : S \cap 2T \neq \emptyset\right\} & \lesssim k \cdot \delta^{s - d + 1}m^{s}w(T)^{d - 1}\\
& = k \cdot \left(\frac{w(T)}{\delta}\right)^{d - 1 - s} \cdot (mw(T))^{s}\\
& \lesssim k \cdot \left(\frac{w(T)}{\epsilon}\right)^{s}.  \end{align*}
Now, if $T$ intersects a cube $Q \in \calQ$, then $2T$ meets the cube $S$ containing the centre of $Q$. On the other hand, each cube $S$ contains only one such centre by (c), and so
\begin{displaymath} \card\{Q \in \calQ : Q \cap T \neq \emptyset\} \lesssim k \left(\frac{w(T)}{\epsilon}\right)^{s}. \end{displaymath}
Thus, the set $P$ satisfies (c) and (d), and the proof of the lemma is complete. \end{proof}

\section{Proof of Theorem \ref{main}} 

With Lemma \ref{mainLemma} at our disposal, the proof of Theorem \ref{main} is straightforward:

\begin{proof}[Proof of Theorem \ref{main}] Fix $s < d - 1$ and let $k = k_{s}$ be the corresponding constant from Lemma \ref{mainLemma}. We will need to construct a compact set $K \subset \R^{d}$ with $\calH^{s}(K) > 0$ and satisfying the tube condition \eqref{tubes} with $t = s$. This is achieved by first defining recursively a sequence of families of closed disjoint cubes $\calQ_{n}$. We will maintain the invariant that the families $\calQ_{n}$ should always satisfy the hypotheses of Lemma \ref{mainLemma}. Clearly the initial collection of cubes $\calQ_{0} := \{[0,1]^{d}\}$ has this property with $\delta = 1$. Applying the lemma with $\calU = \calQ_{0}$, we obtain a family of cubes $\calQ_{1} := \calQ$, which again satisfies the assumptions of the lemma. Proceeding this way, we may define a compact set $K$ by
\begin{displaymath} K = \bigcap_{n = 0}^{\infty} \bigcup_{Q \in \calQ_{n}} Q. \end{displaymath}
Given any cube $Q \in \calQ_{n}$, the following properties of the cubes in $\calQ_{n}$ and $\calQ_{n + 1}(Q) := \{Q' \in \calQ_{n + 1} ; Q' \subset Q\}$, $Q \in \calQ_{n}$, are easy consequences of Lemma \ref{mainLemma}:
\begin{displaymath} \sum_{Q' \in \calQ_{n + 1}(Q)} d(Q')^{s} = d(Q)^{s}, \end{displaymath}
and for any ball $B$ with $d(B) \geq \ell_{n}$ -- the common side-length of the cubes in $\calQ_{n}$ -- one has
\begin{displaymath} \sum_{Q \in \calQ_{n} : Q \cap B \neq \emptyset} d(Q)^{s} \leq Cd(B)^{s} \end{displaymath}
for some absolute constant $C \geq 1$. The first property is precisely Lemma \ref{mainLemma}(b). The second property is a condition far weaker than Lemma \ref{mainLemma}(d), which even implies that the same bound remains valid, if on the left hand side the ball $B$ is replaced by a tube $T \supset B$ of width $w(T) = d(B)$. The two properties have the consequence (see for instance \cite[\S4.12]{Ma}) that $0 < \calH^{s}(K) < \infty$, and in fact each cube $Q \in \calQ_{n}$ has
\begin{equation}\label{form7} \calH^{s}(K \cap Q) \sim d(Q)^{s} \sim \ell_{n}^{s} \end{equation}
with implicit constants independent of $n$; in fact, we only need $\lesssim$ in \eqref{form7}, which follows by using the natural covers for $K \cap Q$.

Now we are prepared to prove the estimate $\calH^{s}(K \cap T) \lesssim w(T)^{s}$ for any given tube $T \subset \R^{d}$. If $w(T) \geq 1$, there is nothing to show, so assume that $w(T) < 1$ and fix $n \in \N$ such that $\ell_{n} < w(T) \leq \ell_{n - 1}$. Then, Lemma \ref{mainLemma}(d) tells us that $T$ meets no more than
\begin{displaymath} \lesssim \left(\frac{w(T)}{\ell_{n}}\right)^{s} \end{displaymath}
cubes in $\calQ_{n}$. In particular, by \eqref{form7}, we have
\begin{displaymath} \calH^{s}(K \cap T) \leq \sum_{Q \in \calQ_{n} : Q \cap T \neq \emptyset} \calH^{s}(K \cap Q) \lesssim \left(\frac{w(T)}{\ell_{n}}\right)^{s} \cdot \ell_{n}^{s} = w(T)^{s}. \end{displaymath}
This completes the proof. \end{proof}

\section{Open problems}

\begin{itemize}
\item (Suggested by V. Suomala) The set $K$ constructed for Theorem \ref{main} is far from being Ahlfors $s$-regular. Does it have to be so?
\item Is it possible to construct a set $K \subset \R^{d}$ with $0 < \calH^{d - 1}(K) < \infty$ such that
\begin{displaymath} \calH^{d - 1}(K \cap T) \leq C_{s}w(T)^{s} \end{displaymath}
for all tubes $T \subset \R^{d}$ and for all $s < d - 1$ simultaneously? More specifically, what happens with $(d - 1)$-dimensional self-similar sets, which contain "irrationality" in the rotational components of the generating similitudes? 
\end{itemize}

\end{document}